\documentclass[letterpaper, 10pt, conference]{ieeeconf}
\IEEEoverridecommandlockouts
\overrideIEEEmargins

\usepackage{cite}

\usepackage{amssymb,amsfonts,amsmath,amsthm}
\usepackage{mathtools}
\usepackage{enumerate}
\usepackage{verbatim}
\usepackage{commath}
\usepackage[mathcal]{euscript}
\usepackage[usenames]{color}

\definecolor{plum} {rgb}{.4,0,.4}
\definecolor{BrickRed} {rgb}{0.6,0,0}

\usepackage[plainpages=false,pdfpagelabels,colorlinks=true,linkcolor=BrickRed,citecolor=plum]{hyperref}

%from djhsu ---->
\def\ddefloop#1{\ifx\ddefloop#1\else\ddef{#1}\expandafter\ddefloop\fi}

% \bA, \bB, ...
\def\ddef#1{\expandafter\def\csname b#1\endcsname{\ensuremath{\boldsymbol{#1}}}}
\ddefloop ABCDEFGHIJKLMNOPQRSTUVWXYZabcdefghijklmnopqrstuvwxyz\ddefloop

% \cA, \cB, ...
\def\ddef#1{\expandafter\def\csname c#1\endcsname{\ensuremath{\mathcal{#1}}}}
\ddefloop ABCDEFGHIJKLMNOPQRSTUVWXYZ\ddefloop
 
% \sA, \sB, ...
\def\ddef#1{\expandafter\def\csname s#1\endcsname{\ensuremath{\mathsf{#1}}}}
\ddefloop ABCDEFGHIJKLMNOPQRSTUVWXYZ\ddefloop

\def\Reals{{\mathbb R}}
 % partial derivative
\def\Ex{{\mathbf E}} % expectation
\def\Pr{{\mathbf P}} % probability
\def\trn{{\hbox{\it\tiny T}}} %transpose
\def\eps{\varepsilon}
\def\deq{:=}

\DeclareMathOperator*{\argmin}{arg\,min}

\newtheorem{theorem}{Theorem}

\newtheorem{lemma}{Lemma}

\newtheorem{remark}{Remark}

\title{\LARGE{Variational Principles for Mirror Descent and Mirror Langevin Dynamics}}

\author{Belinda Tzen${}^*$, Anant Raj${}^{\dagger\S}$, Maxim Raginsky${}^\S$, and Francis Bach${}^{\dagger}$%
\thanks{This work was supported in part by the NSF under award CCF-2106358 (``Analysis and Geometry of Neural Dynamical Systems'') and in part by the Illinois Institute for Data Science and Dynamical Systems
(iDS${}^2$), an NSF HDR TRIPODS institute, under award CCF-1934986. A.~Raj was supported by the Marie Sklodowska-Curie Fellowship (project NN-OVEROPT 101030817).}%
\thanks{${}^*$Columbia University, New York, NY, USA.}%
\thanks{${}^\S$University of Illinois, Urbana, IL, USA.}%
\thanks{${}^\dagger$INRIA, Ecole Normale Sup\'erieure, PSL Research University, Paris, France.}%
\thanks{Emails: bt2314@columbia.edu, anant.raj@inria.fr, maxim@illinois.edu, francis.bach@inria.fr}
}

\begin{document}

\maketitle

%%%%%%%%%%%%%%%%%%%%%%%%%%%%%%%%%%%%%%%%%%%%%%%%%%%%%%%%%%%%%%%%%%%%%%%%%%%%%%%%
% Abstract
%%%%%%%%%%%%%%%%%%%%%%%%%%%%%%%%%%%%%%%%%%%%%%%%%%%%%%%%%%%%%%%%%%%%%%%%%%%%%%%

\begin{abstract}
Mirror descent, introduced by Nemirovski and Yudin in the 1970s, is a primal-dual convex optimization method that can be tailored to the geometry of the optimization problem at hand through the choice of a strongly convex potential function. It arises as a basic primitive in a variety of applications, including large-scale optimization, machine learning, and control. This paper proposes a variational formulation of mirror descent and of its stochastic variant, mirror Langevin dynamics. The main idea, inspired by the classic work of Brezis and Ekeland on variational principles for gradient flows, is to show that mirror descent emerges as a closed-loop solution for a certain optimal control problem, and the Bellman value function is given by the Bregman divergence between the initial condition and the global minimizer of the objective function. 
   
\end{abstract}

\begin{keywords} convex optimization, mirror descent, deterministic and stochastic optimal control.
\end{keywords}

%%%%%%%%%%%%%%%%%%%%%%%%%%%%%%%%%%%%%%%%%%%%%%%%%%%%%%%%%%%%%%%%%%%%%%%%%%%%%%%%
\section{Introduction}
%%%%%%%%%%%%%%%%%%%%%%%%%%%%%%%%%%%%%%%%%%%%%%%%%%%%%%%%%%%%%%%%%%%%%%%%%%%%%%%%

The continuous-time gradient flow
\begin{align}\label{eq:gradflow}
	\dot{x}(t) = - \nabla f(x(t)), \qquad x(0)=x_0
\end{align}
for a $C^1$ objective function $f : \Reals^n \to \Reals$ is a basic primitive in continuous-time optimization and control. Under additional assumptions on the objective $f$, the trajectory of \eqref{eq:gradflow} converges to a minimizer of $f$, which justifies thinking of the gradient flow as a method for asymptotically solving the optimization problem
\begin{align}\label{eq:min_f}
	\text{minimize } f(x), \quad x \in \Reals^n.
\end{align}
However, apart from the local characterization of $-\nabla f(x)$ as the ``direction of steepest descent,'' there appears to be little discussion of the sense, if any, in which \eqref{eq:gradflow} is ``optimal'' among all dynamical systems that asymptotically solve \eqref{eq:min_f}.

One of the few exceptions is the variational principle of Brezis and Ekeland \cite{Brezis_Ekeland,Ghoussoub_Tzou_gradflows}: Fix an arbitrary time horizon $T > 0$. Then, among all absolutely continuous curves $x : [0,T] \to \Reals^n$ with $x(0) = x_0$, the trajectory of \eqref{eq:gradflow} on $[0,T]$ minimizes the action functional
\begin{align}\label{eq:BE_action}
	S(x(\cdot)) \deq \int^T_0 \{ f(x(t)) + f^*(-\dot{x}(t)) \} \dif t + \frac{1}{2}|x(T)|^2,
\end{align}
where
\begin{align*}
	f^*(v) \deq \sup_{x \in \Reals^n} \{ \langle v,x \rangle - f(x) \}
\end{align*}
is the Legendre--Fenchel conjugate of $f$ and $|\cdot|$ denotes the Euclidean norm on $\Reals^n$. The minimum value of $S$ over all such $x(\cdot)$ is equal to $\frac{1}{2}|x_0|^2$. The underlying idea is simple and boils down to a careful analysis of the equality cases of the Fenchel--Young inequality
\begin{align*}
	f(x) + f^*(v) \ge \langle v,x \rangle.
\end{align*}
However, the Brezis--Ekeland variational principle does not say anything about the asymptotic behavior of the extremal trajectories or about the curious fact that a finite-horizon problem of minimizing the action \eqref{eq:BE_action} has a solution given by the flow of a time-invariant dynamical system.

In this paper, we revisit this problem from a control-theoretic point of view and provide a new variational interpretation of the gradient flow as an \textit{infinite-horizon stabilizing optimal control} \cite[\S8.5]{Sontag_system_theory}. Moreover, we consider a more general method of \textit{mirror descent}. This method, introduced in the 1970s by Nemirovski and Yudin \cite[Ch.~3]{Nemirovski_Yudn}, can be tailored to the geometry of the optimization problem at hand through the choice of a strongly convex \textit{potential function}. In continuous time, mirror descent is implemented by a time-invariant dynamical system whose state $x(t) \in \Reals^n$ and output $y(t) \in \Reals^n$ evolve according to
\begin{align}\label{eq:mirror_flow}
	\begin{split}
	\dot{x}(t) &= -\nabla f (\nabla \varphi^*(x(t))), \qquad x(0) = x_0 \\
	y(t) &= \nabla \varphi^*(x(t))
	\end{split}
\end{align}
where $\varphi : \Reals^n \to \Reals$ the potential function and $\varphi^*$ is its Legendre--Fenchel conjugate. It is common to refer to $x(t)$ and $y(t)$, respectively, as the \textit{dual-space} and the \textit{primal-space} trajectories. (The Euclidean gradient flow \eqref{eq:gradflow} is a special case of \eqref{eq:mirror_flow} with $\varphi(x) = \frac{1}{2}|x|^2$.)

\subsection{Brief summary of contributions}

We show the following: Suppose that the objective $f$ is strictly convex. For the controlled system $\dot{x}(t) = u(t), y(t) = \nabla\varphi^*(x(t))$, we consider the class of all \textit{stabilizing controls} \cite[\S8.5]{Sontag_system_theory}, i.e., appropriately well-behaved functions $u : [0,\infty) \to \Reals^n$ such that the output $y(t)$ converges, as $t \to \infty$, to the unique minimizer of $f$. This class contains, among others, sufficiently smooth state feedback controls of the form $u(t) = k(x(t))$. We then identify an instantaneous cost function $q(x,u)$, closely related to the Lagrangian in \eqref{eq:BE_action}, such that the state feedback law $k(x) = - \nabla f(\nabla\varphi^*(x))$ [or, equivalently, the output feedback law $\tilde{k}(y) = -\nabla f(y)$] gives a control that minimizes the infinite-horizon cost
\begin{align}\label{eq:infinite_horizon_cost}
	\int^\infty_0 q(x(t),u(t)) \dif t
\end{align}
over all stabilizing controls. Moreover, the value function $V(x_0)$, i.e., the minimum value of this cost as a function of the initial state $x_0$, is given by a certain ``distance,'' induced by the potential $\varphi$, between the initial output $y_0 = \nabla \varphi^*(x_0)$ and the unique minimizer of $f$. At the same time, $V(x)$ is the Lyapunov function for the closed-loop system $\dot{x}(t) = -\nabla f(\nabla \varphi^*(x(t)))$.

We also consider a stochastic variant of mirror descent, the so-called \textit{mirror Langevin dynamics} \cite{Hsieh_MLD,Chewi_MLD,Li_MLD}, which is implemented by an It\^o stochastic differential equation
\begin{align}\label{eq:MLD}
	\begin{split}
	\!\!\!\dif X_t &= - \nabla f (\nabla \varphi^*(X_t)) \dif t + \sqrt{2\eps (\nabla^2 \varphi^*(X_t))^{-1}} \dif W_t \\
	\!\!\! Y_t &= \nabla \varphi^*(X_t)
	\end{split}
\end{align}
driven by a standard $n$-dimensional Brownian motion $(W_t)_{t \ge 0}$, where $\eps > 0$ is a small ``temperature'' parameter. In this setting, we consider a finite-horizon optimal control problem of minimizing the expected cost
\begin{align*}
	\Ex\Bigg[\int^T_0 q(X_t,u_t)\dif t + r(X_T)\Bigg|X_0 = x_0\Bigg],
\end{align*}
over all admissible control processes $(u_t)_{0 \le t \le T}$ entering into the controlled SDE
\begin{align*}
	\dif X_t = u_t \dif t +  \sqrt{2\eps (\nabla^2 \varphi^*(X_t))^{-1}} \dif W_t.
\end{align*}
Here, $q$ is the same cost as in \eqref{eq:infinite_horizon_cost} and $r$ is an appropriately chosen terminal cost. As in the deterministic case, the mirror Langevin dynamics \eqref{eq:MLD} emerges as the closed-loop system corresponding to the optimal control, although the value function is now time-dependent. 

\section{The deterministic problem}

\subsection{Some preliminaries}
\label{ssec:prelims}

We assume that the objective function $f : \Reals^n \to \Reals$ is $C^1$ and strictly convex and that the potential function $\varphi : \Reals^n \to \Reals$ is $C^2$ and strictly convex. We let $\bar{y}$ denote the unique global minimizer of $f$. Both $f$ and $\varphi$ are assumed to be of \textit{Legendre type}, i.e., $|\nabla f(x)|,|\nabla \varphi(x)| \to + \infty$ as $|x| \to +\infty$. As a consequence (see, e.g., \cite[Thm.~26.5]{Rockafellar_book}), the gradient map $\nabla \varphi : \Reals^n \to \Reals^n$ is bijective, with $(\nabla \varphi)^{-1} = \nabla \varphi^*$. The potential $\varphi$ and its conjugate $\varphi^*$ induce the so-called \textit{Bregman divergences}
\begin{align}\label{eq:Bregman}
	\begin{split}
	D_\varphi(y,y') &\deq \varphi(y) - \varphi(y') - \langle \nabla\varphi(y'), y - y' \rangle \\
	D_{\varphi^*}(x,x') &\deq \varphi^*(x) - \varphi^*(x') - \langle \nabla\varphi^*(x'), x - x' \rangle
	\end{split}
\end{align}
and the following relation holds for any pair of points $y,y' \in \Reals^n$ and their ``mirror images'' $x = \nabla \varphi(y), x' = \nabla \varphi(y')$:
\begin{align*}
	D_\varphi(y,y') = D_{\varphi^*}(x',x)
\end{align*}
(cf.~\cite[\S11.2]{PLG} for details). We also require that, for each fixed $x' \in \Reals^n$, the map $x \mapsto D_{\varphi^*}(x,x')$ is \textit{radially unbounded}, i.e., $D_{\varphi^*}(x,x') \to + \infty$ as $|x| \to +\infty$. This will be satisfied, for example, if $\varphi$ is strongly convex, i.e., there exists some $\alpha > 0$, such that
\begin{align}\label{eq:phi_SC}
	\varphi(y') \ge \varphi(y) + \langle \nabla \varphi(y), y'-y \rangle + \frac{\alpha}{2}|y' - y|^2
\end{align}
for all $y,y' \in \Reals^n$. Radial unboundedness is needed for the invocation of the Lyapunov criterion for global asymptotic stability \cite[Sec.~5.7]{Sontag_system_theory}.
\begin{remark} {\em We assume that $\varphi$ is finite on all of $\Reals^n$ mainly to keep the exposition simple. It is not hard to adapt the analysis to the case when the potential function $\varphi$ is defined on a closed convex set $\sX \subseteq \Reals^n$ with nonempty interior, and $|\nabla \varphi(x)| \to + \infty$ as $x$ approaches any point on the boundary of $\sX$. This  corresponds to the problem of minimizing $f(x)$ subject to the constraint $x \in \sX$.}
\end{remark}

\subsection{Infinite-horizon optimal stabilizing controls}
\label{ssec:infinite_horizon}

Consider the time-invariant controlled dynamical system
\begin{align}\label{eq:basic_system}
	\dot{x}(t) = u(t).
\end{align}
For any $x_0 \in \Reals^n$, let $\cU_{x_0}$ denote the class of all \textit{stabilizing controls} at $x_0$, i.e., all locally essentially bounded maps $u : [0,\infty) \to \Reals^n$, such that the trajectory $x(t)$ of \eqref{eq:basic_system} with $x(0) = x_0$ is defined for all $t \ge 0$ and $x(t) \to \bar{x}$ as $t \to \infty$, where $\bar{x} \deq \nabla \varphi(\bar{y})$. We would like to minimize the cost
\begin{align*}
	J_\infty(x_0, u(\cdot)) \deq \int^\infty_0 q(x(t),u(t))\dif t,
\end{align*}
over all $u(\cdot) \in \cU_{x_0}$, where
\begin{align}\label{eq:BE_cost}
	q(x,u) \deq f(\nabla \varphi^*(x)) + f^*(-u) + \langle u, \bar{y} \rangle.
\end{align}
The class $\cU_{x_0}$ is evidently nonempty since, for example, the control $u(t) = (\bar{x}-x_0){\mathbf 1}_{\{0 \le t \le 1\}}$ is stabilizing at $x_0$. We denote by $V(x_0)$ the \textit{value function}, i.e., infimum of $J_\infty(x_0, u(\cdot))$ over all $u(\cdot) \in \cU_{x_0}$.

\subsection{The main result}

Let $V(x) \deq D_{\varphi^*}(x,\bar{x})$. Theorem~\ref{thm:MD_optimality}, stated and proved below, states that $V$ is the value function for the above infinite-horizon optimal control problem, and that  the mirror descent dynamics \eqref{eq:mirror_flow} is the closed-loop system corresponding to an optimal stabilizing control. Moreover, the value function $V$ is also a global Lyapunov function for \eqref{eq:mirror_flow}, and the point $\bar{x}$ is its global asymptotically stable equilibrium. The proof makes essential use of the following lemma, which we will also need at several points in the sequel:

\begin{lemma}\label{lm:Vdot_inequality} The function $V$ has the following properties:
	\begin{enumerate}
		\item It is $C^2$ and strictly convex.
		\item $V(\bar{x}) = 0$, and $V(x) > 0$ for $x \neq \bar{x}$.
		\item $V(x) \to +\infty$ as $|x| \to +\infty$.
	\end{enumerate}
Moreover, the following inequality holds for $\dot{V}(x,u) \deq \langle \nabla V(x), u \rangle$:
	\begin{align}\label{eq:Vdot_inequality}
		\dot{V}(x,u) + q(x,u) \ge 0, \qquad x,u \in \Reals^n
	\end{align}
	and equality is attained iff $u = -\nabla f (\nabla \varphi^*(x))$.
\end{lemma}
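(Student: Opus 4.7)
The plan is to verify the three regularity/positivity properties of $V$ directly from its definition under the Legendre-type hypotheses, and then to reduce the inequality to a single application of the Fenchel--Young inequality for $f$.

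I would begin by rewriting $V$ in closed form. Because $\bar x = \nabla \varphi(\bar y)$, we have $\nabla \varphi^*(\bar x) = \bar y$, so
\[
V(x) = \varphi^*(x) - \varphi^*(\bar x) - \langle \bar y,\, x - \bar x \rangle.
\]
Since $\varphi$ is $C^2$, strictly convex, and of Legendre type, $\nabla \varphi$ is a $C^1$ diffeomorphism of $\Reals^n$, so by the inverse function theorem $\varphi^*$ is $C^2$ with $\nabla^2 \varphi^*(x) = \bigl(\nabla^2 \varphi(\nabla \varphi^*(x))\bigr)^{-1} \succ 0$. Differentiating $V$ yields
\[
\nabla V(x) = \nabla \varphi^*(x) - \bar y, \qquad \nabla^2 V(x) = \nabla^2 \varphi^*(x) \succ 0,
\]
which establishes property (1). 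Property (2) follows because $V(\bar x) = 0$ is built into the definition of the Bregman divergence, while strict convexity of $\varphi^*$ forces $D_{\varphi^*}(x,\bar x) > 0$ for $x \neq \bar x$. Property (3) is precisely the radial unboundedness assumption imposed in Section~\ref{ssec:prelims} (guaranteed, e.g., by strong convexity of $\varphi$ via \eqref{eq:phi_SC}).

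For the inequality \eqref{eq:Vdot_inequality}, I would just substitute the expressions for $\nabla V(x)$ and $q(x,u)$ and collect terms:
\begin{align*}
\dot V(x,u) + q(x,u)
&= \langle \nabla \varphi^*(x) - \bar y,\, u \rangle + f(\nabla \varphi^*(x)) + f^*(-u) + \langle u, \bar y \rangle \\
&= f(\nabla \varphi^*(x)) + f^*(-u) - \langle \nabla \varphi^*(x),\, -u \rangle.
\end{align*}
Setting $y = \nabla \varphi^*(x)$ and $v = -u$, the right-hand side is exactly the Fenchel--Young slack $f(y) + f^*(v) - \langle y, v \rangle \ge 0$. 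Because $f$ is $C^1$ and strictly convex, equality in Fenchel--Young holds iff $v = \nabla f(y)$, i.e., iff $u = -\nabla f(\nabla \varphi^*(x))$, which is precisely the closed-loop mirror descent law in \eqref{eq:mirror_flow}.

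The main obstacle is of a bookkeeping nature rather than a conceptual one: one must justify that $\varphi^*$ is actually $C^2$ with a pointwise positive definite Hessian, since this is what makes $V$ genuinely $C^2$ and strictly convex and delivers the sharp equality characterization via Fenchel--Young. Once this is in hand from the Legendre-type hypothesis combined with $C^2$ strict convexity of $\varphi$, the rest of the proof is a direct computation.
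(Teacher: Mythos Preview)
Your proof is correct and follows essentially the same route as the paper: items (1)--(3) are read off from the standing assumptions on $\varphi$ (you supply a bit more detail via the inverse function theorem for the $C^2$ claim), and the inequality is reduced, after the same cancellation of the $\langle u,\bar y\rangle$ terms, to the Fenchel--Young inequality $f(y)+f^*(v)\ge\langle y,v\rangle$ with its equality case $v=\nabla f(y)$.
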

\begin{proof} Items 1)--3) are immediate consequences of our assumptions on $\varphi$. Moreover, a simple computation shows that
	\begin{align*}
		 \dot{V}(x,u) + q(x,u)  = f(\nabla \varphi^*(x)) + f^*(-u) + \langle u,\nabla \varphi^*(x) \rangle,
	\end{align*}
	which is nonnegative by the Fenchel--Young inequality. The equality condition in \eqref{eq:Vdot_inequality} follows by \cite[Thm.~23.5]{Rockafellar_book}.
\end{proof}

\begin{theorem}\label{thm:MD_optimality} We have the following:
	\begin{enumerate}
		\item For any stabilizing control $u(\cdot) \in \cU_{x_0}$ and for all $t \ge 0$,
		\begin{align}\label{eq:suboptimal_controls}
			\int^{t}_{0} q(x(t),u(t)) \dif t \ge V(x_0) - V(x(t)).
		\end{align}
		In particular, $J_\infty(x_0,u(\cdot)) \ge V(x_0)$.
		\item For each $x_0$, the closed-loop system $\dot{x}(t) = -\nabla f(\nabla\varphi^*(x(t)))$ gives rise to an optimal stabilizing control $u(t) = -\nabla f (\nabla \varphi^*(x(t)))$, such that
		\begin{align*}
			J_\infty(x_0, u(\cdot)) = V(x_0) = D_{\varphi}(x_0,\bar{x}).
		\end{align*}
		Moreover, $V(x)$ is a global Lyapunov function for the closed-loop system.
	\end{enumerate}
\end{theorem}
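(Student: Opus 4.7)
The plan is to treat the pointwise inequality $\dot{V}(x,u) + q(x,u) \geq 0$ from Lemma~\ref{lm:Vdot_inequality} as the Hamilton--Jacobi--Bellman dissipation inequality for this problem: integrating it along any admissible trajectory of~\eqref{eq:basic_system} will yield the lower bound of part~1), and the equality condition in the same lemma identifies the optimal feedback in part~2).

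For part~1), fix $u(\cdot) \in \cU_{x_0}$ with trajectory $x(\cdot)$. Since $V \in C^2$ and $\dot{x}(t) = u(t)$ a.e., the chain rule gives $\frac{d}{dt} V(x(t)) = \langle \nabla V(x(t)), u(t)\rangle = \dot{V}(x(t), u(t))$; combining with~\eqref{eq:Vdot_inequality} and integrating over $[0,t]$ immediately produces~\eqref{eq:suboptimal_controls}. To pass to the infinite-horizon bound I would first observe that $q \geq 0$ on all of $\Reals^n \times \Reals^n$: Fenchel--Young gives $f^*(-u) + \langle u, \bar{y}\rangle \geq -f(\bar{y})$, while $f(\nabla\varphi^*(x)) \geq f(\bar{y})$ because $\bar{y}$ minimizes $f$. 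Hence $t \mapsto \int_0^t q(x(s),u(s))\,ds$ is nondecreasing, so $J_\infty(x_0,u(\cdot))$ is well-defined in $[0,+\infty]$. The stabilizing condition $x(t) \to \bar{x}$, together with continuity of $V$ and $V(\bar{x})=0$, gives $V(x(t)) \to 0$, and letting $t \to \infty$ in~\eqref{eq:suboptimal_controls} yields $J_\infty(x_0,u(\cdot)) \geq V(x_0)$.

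For part~2), the closed-loop control $u^\star(t) = -\nabla f(\nabla\varphi^*(x(t)))$ saturates Lemma~\ref{lm:Vdot_inequality}, so along the closed-loop trajectory $\frac{d}{dt}V(x(t)) = -q(x(t), u^\star(t))$. To confirm that $u^\star \in \cU_{x_0}$, I would compute $\nabla V(x) = \nabla\varphi^*(x) - \bar{y}$ from the definition $V(x) = D_{\varphi^*}(x,\bar{x})$ together with $\nabla\varphi^*(\bar{x}) = \bar{y}$, so that
\[
\dot{V}(x, u^\star) = -\langle \nabla\varphi^*(x) - \bar{y},\, \nabla f(\nabla\varphi^*(x))\rangle.
\]
Strict convexity of $f$ and the first-order condition $\nabla f(\bar{y}) = 0$ make this quantity strictly negative whenever $\nabla\varphi^*(x) \neq \bar{y}$, i.e., whenever $x \neq \bar{x}$. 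Together with items 1)--3) of Lemma~\ref{lm:Vdot_inequality}, this verifies the hypotheses of the Lyapunov criterion for global asymptotic stability cited in Section~\ref{ssec:prelims}, establishing simultaneously that $V$ is a global Lyapunov function for the closed-loop system and that $x(t) \to \bar{x}$ as $t \to \infty$. Integrating the equality $\dot V = -q$ on $[0,t]$ and sending $t \to \infty$ then gives $J_\infty(x_0,u^\star) = V(x_0)$, matching the lower bound from part~1) and thereby establishing optimality.

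I expect the only delicate step to be the passage to the infinite-horizon limit: specifically, knowing that $J_\infty$ is well-defined for an arbitrary stabilizing $u(\cdot)$ (handled by the nonnegativity argument for $q$ above) and that the closed-loop trajectory is itself stabilizing (handled by the Lyapunov argument, which crucially invokes the radial unboundedness of $V$ assumed in Section~\ref{ssec:prelims}). Everything else reduces to integrating the already-established inequality~\eqref{eq:Vdot_inequality}.
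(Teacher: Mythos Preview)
Your proposal is correct and follows essentially the same route as the paper: integrate the dissipation inequality~\eqref{eq:Vdot_inequality} for the lower bound, then use the equality case together with a Lyapunov argument for optimality of the mirror-descent feedback. The only cosmetic difference is that you establish $\dot V(x,u^\star)<0$ for $x\neq\bar x$ via strict monotonicity of $\nabla f$, whereas the paper rewrites $-q$ and bounds it by $f(\bar y)-f(y(t))$ via the gradient inequality; both arguments are equivalent here, and your explicit verification that $q\ge 0$ (hence $J_\infty$ is well-defined) is a welcome addition that the paper leaves implicit.
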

\begin{proof} Let $x_0 \in \Reals^n$ be given and consider an arbitrary stabilizing control $u(\cdot) \in \cU_{x_0}$. Then, for $\dot{x}(t) = u(t)$ with $x(0) = x_0$ we have
\begin{align*}
	V(x(t)) - V(x_0) &= \int^t_0 \frac{\dif}{\dif s}V(x(s)) \dif s \\
	&= \int^t_0 \dot{V}(x(s),u(s)) \dif s \\
	&\ge - \int^t_0 q(x(s),u(s)) \dif s,
\end{align*}
where the last step follows from \eqref{eq:Vdot_inequality}. Rearranging gives \eqref{eq:suboptimal_controls}. Moreover, taking the limit as $t \to \infty$ and using the fact that  $V(x(t)) \to 0$ as $t \to \infty$ since $u(\cdot)$ is stabilizing, we get the inequality $J_\infty(x_0, u(\cdot)) \ge V(x)$.

Next, consider the closed-loop system $\dot{x}(t) = -\nabla f (\nabla \varphi^*(x(t)))$, $x(0) = x_0$, that generates the mirror descent flow. Then $\bar{x}$ is evidently an equilibrium point since $\nabla f(\nabla\varphi^*(\bar{x})) = \nabla f(\bar{y}) = 0$. Letting $y(t) \deq \nabla \varphi^*(x(t))$ and using \eqref{eq:Vdot_inequality}, we have
\begin{align*}
	\frac{\dif}{\dif t}V(x(t)) &= \dot{V}(x(t),-\nabla f(y(t))) \nonumber\\
	&  = - q(x(t), - \nabla f(y(t))) \\
	& = \langle \nabla f(y(t)), \bar{y} \rangle - f^*(\nabla f(y(t))) - f(y(t)) \\
	& \le f(\bar{y}) - f(y(t)),
\end{align*}
which is strictly negative whenever $y(t) \neq \bar{y}$ by the strict convexity of $f$, or, equivalently, whenever $x(t) \neq \bar{x}$ since $\nabla \varphi^*$ is a bijection. Together with Lemma~\ref{lm:Vdot_inequality}, this shows that $V$ is a global Lyapunov function \cite[Def.~5.7.1]{Sontag_system_theory} for the above closed-loop system, so $\bar{x}$ is a globally asymptotically stable equilibrium \cite[Thm.~17]{Sontag_system_theory}. Thus, the control $u(t) = -\nabla f (\nabla \varphi^*(x(t)))$ is stabilizing at $x_0$, and $J_\infty(x_0, u(\cdot)) = V(x_0)$ from the equality condition in \eqref{eq:Vdot_inequality}.
\end{proof}

\subsection{Quantitative estimates}

Theorem~\ref{thm:MD_optimality} allows us to obtain quantitative estimates on the approach of the trajectory of \eqref{eq:mirror_flow} to equilibrium. While similar estimates have been given in some earlier works \cite{Krichene_AMD,Telgarsky_linopt}, the appeal of our optimal control perspective is that it allows to obtain such guarantees in a unified manner. It will be useful to introduce the following definition \cite{Bartlett_etal_AOGD}: We say that the objective function $f$ is \textit{$\mu$-strongly convex} ($\mu \ge 0$) w.r.t.\ the potential function $\varphi$ if
\begin{align*}
	f(y') \ge f(y) + \langle \nabla f(y), y'-y \rangle + \mu D_\varphi(y',y), \,\, y,y' \in \Reals^n.
\end{align*}
(Obviously, if $\mu = 0$, this is simply convexity; when $\mu > 0$, the function $f$ has some nonzero ``curvature'' in some neighborhood of each point $x$, where the ``geometry'' is determined by the potential $\varphi$.)

\begin{theorem}\label{thm:MD_quantitative} Let $(x(t),y(t))$, $t \ge 0$, be the state and the output trajectories of the mirror descent dynamics \eqref{eq:mirror_flow} starting from $x(0) = x_0$ and $y(0) = y_0 = \nabla \varphi^*(x_0)$. Then the following holds for every $t > 0$:
	\begin{enumerate}
		\item If $f$ is convex, then
		\begin{align}\label{eq:convex_rate}
			f(y(t)) - f(\bar{y}) \le  \frac{1}{t}D_{\varphi}(\bar{y},y_0).
		\end{align}
		\item If $f$ is $\mu$-strongly convex w.r.t.\ $\varphi$ then
		\begin{align}\label{eq:strongly_convex_rate}
			D_{\varphi}(\bar{y},y(t)) \le D_{\varphi}(\bar{y},y_0)e^{-\mu t},
		\end{align}
		and in that case the system \eqref{eq:mirror_flow} is exponentially stable.
	\end{enumerate}
\end{theorem}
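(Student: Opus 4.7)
My plan is to treat both parts uniformly by working with the Lyapunov function $V(x)\deq D_{\varphi^*}(x,\bar{x})$ from Lemma \ref{lm:Vdot_inequality}, using the duality identity $V(x(t))=D_{\varphi^*}(x(t),\bar{x})=D_{\varphi}(\bar{y},y(t))$ so that bounds on $V$ along the closed-loop trajectory immediately translate into bounds on $D_\varphi(\bar{y},y(t))$. The key workhorse is the chain of equalities derived in the proof of Theorem \ref{thm:MD_optimality}, namely
\begin{align*}
\frac{\dif}{\dif t}V(x(t)) = \langle \nabla f(y(t)),\bar{y}-y(t)\rangle \le f(\bar{y})-f(y(t)),
\end{align*}
which I will use directly and then refine under strong convexity.

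For part (1), I would integrate the inequality $\frac{\dif}{\dif t}V(x(t))\le -(f(y(t))-f(\bar{y}))$ from $0$ to $t$. Since $V\ge 0$, this yields
\begin{align*}
\int_0^t (f(y(s))-f(\bar{y}))\,\dif s \le V(x_0)=D_\varphi(\bar{y},y_0).
\end{align*}
To upgrade this averaged bound to a pointwise bound on $f(y(t))-f(\bar{y})$, I need monotonicity of $s\mapsto f(y(s))$. A direct computation along \eqref{eq:mirror_flow} gives
\begin{align*}
\frac{\dif}{\dif t}f(y(t)) = \langle \nabla f(y(t)),\nabla^2\varphi^*(x(t))\dot{x}(t)\rangle = -\langle\nabla f(y(t)),\nabla^2\varphi^*(x(t))\nabla f(y(t))\rangle\le 0,
\end{align*}
since strict convexity of $\varphi$ makes $\nabla^2\varphi^*$ positive semidefinite. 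Combining this monotonicity with the integrated bound gives $t(f(y(t))-f(\bar{y}))\le D_\varphi(\bar{y},y_0)$, which is \eqref{eq:convex_rate}.

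For part (2), I would refine the Lyapunov estimate using $\mu$-strong convexity of $f$ with respect to $\varphi$. The defining inequality evaluated at $(y,y')=(y(t),\bar{y})$ yields
\begin{align*}
\langle\nabla f(y(t)),\bar{y}-y(t)\rangle \le f(\bar{y})-f(y(t))-\mu D_\varphi(\bar{y},y(t))\le -\mu D_\varphi(\bar{y},y(t)),
\end{align*}
where the last step uses $f(y(t))\ge f(\bar{y})$. Substituting into the expression for $\frac{\dif}{\dif t}V(x(t))$ gives the differential inequality $\dot V(x(t))\le -\mu V(x(t))$, and Gr\"onwall's inequality delivers \eqref{eq:strongly_convex_rate}. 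Exponential stability then follows because $V$ is radially unbounded, $C^2$, positive definite about $\bar{x}$, and decays exponentially along trajectories.

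The only delicate step is the monotonicity of $f(y(t))$ in part (1); everything else is essentially substitution into the Lyapunov calculation already done in Theorem \ref{thm:MD_optimality}. The monotonicity step is routine but it is the one place where I must invoke $\nabla^2\varphi^*\succeq 0$ (rather than just the Fenchel--Young inequality), so that is where I would be most careful.
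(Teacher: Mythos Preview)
Your proposal is correct and follows essentially the same route as the paper: both arguments compute $\frac{\dif}{\dif t}V(x(t))=\langle\nabla f(y(t)),\bar{y}-y(t)\rangle$, bound it via convexity (resp.\ $\mu$-strong convexity), and for part~(1) invoke the monotonicity of $t\mapsto f(y(t))$ using $\nabla^2\varphi^*\succ 0$ to pass from the integrated bound to the pointwise one, while for part~(2) apply Gr\"onwall. The only cosmetic difference is that the paper packages the convexity bound through the identity $q(x(t),u(t))=f(y(t))-f(\bar{y})+D_f(\bar{y},y(t))$ before dropping the nonnegative $D_f$ term, whereas you apply the convexity inequality directly.
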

\begin{proof} Let $u(t) = -\nabla f(\nabla \varphi^*(x(t)))$ be the state feedback law that achieves $V(x_0)$. Then
	\begin{align*}
		q(x(t),u(t)) &= - \dot{V}(x(t),u(t)) \\
		&= \langle \nabla f(y(t)), y(t) - \bar{y} \rangle \\
		&= f(y(t)) - f(\bar{y}) + D_f(\bar{y},y(t)).
	\end{align*}
	where the first equality is by Lemma~\ref{lm:Vdot_inequality} and the last equality follows by rearranging and using the definition $D_f(y,y') = f(y) - f(y') - \langle f(y'), y-y' \rangle$. Therefore, using Theorem~\ref{thm:MD_optimality} and the fact that $D_f(\cdot,\cdot) \ge 0$, we have
	\begin{align*}
		V(x_0) &\ge \int^t_0 q(x(s),u(s)) \dif s \\
		&= \int^t_0 \{ f(y(s)) - f(\bar{y})\} \dif s \\
		&\ge t \big(f(y(t)) - f(\bar{y})\big),
	\end{align*}
	where the last inequality follows from the fact that the value of the objective $f$ decreases along the output trajectory $y(t)$:
	\begin{align*}
		\frac{\dif}{\dif t}f(y(t)) &= \langle \nabla f(y(t)), \dot{y}(t)\rangle \\
		&= \langle \nabla f(y(t)), \nabla^2 \varphi^*(x(t)) \dot{x}(t)\rangle \\
		&= - \langle \nabla f(y(t)), \nabla^2 \varphi^*(x(t)) \nabla f(y(t)) \rangle \le 0
	\end{align*}
---	since $\varphi^*$ is $C^2$ and strictly convex, its Hessian $\nabla^2 \varphi^*(x)$ is positive definite for all $x \in \Reals^n$. Dividing by $t$ and using the fact that $V(x_0) = D_{\varphi^*}(x_0,\bar{x}) = D_\varphi(\bar{y},y_0)$, we get \eqref{eq:convex_rate}.
	
	When $f$ is $\mu$-strongly convex, we have
	\begin{align*}
		\frac{\dif}{\dif t}V(x(t)) &= \dot{V}(x(t),u(t)) \\
		& = \langle \nabla f(y(t)), \bar{y} - y(t) \rangle \\
		& \le f(\bar{y}) - f(y(t)) - \mu D_{\varphi}(\bar{y},y(t)) \\
		& = f(\bar{y}) - f(y(t)) - \mu D_{\varphi^*}(x(t),\bar{x}) \\
		& = f(\bar{y}) - f(y(t)) - \mu V(x(t)).
	\end{align*}
	Integrating gives the estimate
	\begin{align*}
		V(x(t)) \le e^{-\mu t}V(x_0) + \int^t_0 e^{-\mu(t-s)} \{f(\bar{y}) - f(y(s))\} \dif s
	\end{align*}
	which yields \eqref{eq:strongly_convex_rate} since $f(\bar{y}) \le f(y)$ for all $y$.
\end{proof}

\subsection{A simple example}

As a simple illustration, consider the quadratic objective $f(x) = \frac{1}{2}|Ax-b|^2$ with $A \in \Reals^{p \times n}$ and $b \in \Reals^p$ and  the quadratic potential $\varphi(x) = \frac{1}{2}|x|^2$. Assume that $A^\trn A$ is nonsingular. Then the instantaneous cost $q(x,u)$ in \eqref{eq:BE_cost} takes the form
\begin{align*}
	q(x,u) = \frac{1}{2}|Ax-b|^2 + \frac{1}{2}\langle u, (A^\trn A)^{-1}u \rangle - \frac{1}{2}|A\bar{y}-b|^2,
\end{align*}
where $\bar{y} = (A^\trn A)^{-1}A^\trn b$ is the unique minimizer of $f$. Thus, the control-theoretic interpretation of the gradient flow for this problem naturally leads to infinite-horizon optimal stabilization of a linear system with a quadratic cost.

\section{The stochastic problem}

We now consider a stochastic version of continuous-time mirror descent, dubbed \textit{mirror Langevin dynamics}, or MLD \cite{Hsieh_MLD,Chewi_MLD,Li_MLD}. The MLD generates a pair of random trajectories $(X_t,Y_t)_{t \ge 0}$ according to \eqref{eq:MLD}. The words ``Langevin dynamics'' allude to the fact that, with the quadratic potential $\varphi(x) = \frac{1}{2}|x|^2$, \eqref{eq:MLD} reduces to the usual Langevin dynamics
\begin{align*}
	\dif X_t = - \nabla f(X_t) \dif t + \sqrt{2\eps} \dif W_t.
\end{align*}
The use of MLD is mainly in the context of sampling, where one makes use of the fact that the steady-state probability density of $Y_t$ is proportional to $e^{-f/\eps}$. Moreover, since this limiting density concentrates on the set of global minimizers of $f$ as $\eps \downarrow 0$, the sampling problem is intimately related to the problem of minimizing $f$.

\subsection{Some preliminaries}

In addition to the conditions imposed on $f$ and $\varphi$ in Sec.~\ref{ssec:prelims}, we also assume the following:
\begin{itemize}
	\item the objective function $f$ has a Lipschitz-continuous gradient;
	\item the potential function $\varphi$ is $C^2$, strongly convex, cf.~\eqref{eq:phi_SC}, and has the \textit{modified self-concordance property} \cite{Li_MLD}, i.e., there exists some constant $c > 0$, such that
	\begin{align*}
		\| \sqrt{\nabla^2 \varphi(x)} - \sqrt{\nabla^2 \varphi(x')} \|_2 \le c |x-x'|
	\end{align*}
	for all $x,x' \in \Reals^n$, where $\| \cdot \|_2$ is the $2$-Schatten (or Hilbert--Schmidt) norm.
\end{itemize}
In particular, the above assumption on $\varphi$ implies that $\varphi^*$ has a Lipschitz-continuous gradient \cite[Thm.~4.2.1]{Hiriart_convex} and that the map $x \mapsto \sqrt{(\nabla^2 \varphi^*(x))^{-1}}$ is Lipschitz-continuous \cite{Li_MLD}. 

\subsection{A finite-horizon optimal control problem}

We work in the usual setting of controlled diffusion processes \cite[{\S}VI.3-4]{Fleming_Rishel_book}. Let $(\Omega, \cF, (\cF_t)_{t \ge 0},\Pr)$ be a probability space with a complete and right-continuous filtration, and let $(W_t)_{t \ge 0}$ be a standard $n$-dimensional $(\cF_t)$-Brownian motion. Let a finite horizon $0 < T < \infty$ be given. An \textit{admissible control} (of state feedback type) is any measurable function $u : \Reals^n \times [0,T] \to \Reals^n$, such that the It\^o SDE
\begin{align}\label{eq:controlled_SDE}
	\dif X_t = u(X_t,t) \dif t + \sqrt{2 \eps (\nabla^2 \varphi^*(X_t))^{-1}} \dif W_t
\end{align}
has a unique strong solution for all $t \in [0,T]$ and for any deterministic initial condition $X_0 = x_0$ (cf.~\cite[\S5.2]{Karatzas_Shreve_book} for details). For each $t \in [0,T]$ define the \textit{expected cost-to-go}
\begin{align}\label{eq:stochastic_cost_to_go}
	& J(x,t;u(\cdot))\nonumber \\
	&  \deq \Ex\Bigg[\int^T_t q(X_s,u_s)\dif s + D_{\varphi^*}(X_T,\bar{x})\Bigg|X_t = x \Bigg],
\end{align}
with the instantaneous cost $q$ the same as in \eqref{eq:BE_cost}, where $u_t$ is shorthand for $u(X_t,t)$, and let
\begin{align}\label{eq:stochastic_valfun}
	V(x,t) \deq \inf_{u(\cdot) \text{ admissible}} J(x,t; u(\cdot))
\end{align}
be the value function. We say that an admissible control $u(\cdot)$ is optimal if $J(x,t; u(\cdot)) = V(x,t)$ for all $x \in \Reals^n$ and all $t \in [0,T]$. Observe that, in contrast with the deterministic infinite-horizon problem posed in Sec.~\ref{ssec:infinite_horizon}, here we are dealing with a finite-horizon stochastic problem, and there is, in addition to the instantaneous cost $q$, also a terminal cost $D_{\varphi^*}(\cdot,\bar{x})$. The form of the performance criterion in \eqref{eq:stochastic_cost_to_go} is reminiscent of the Brezis--Ekeland action functional \eqref{eq:BE_action}.

\subsection{The main result}

\begin{theorem}\label{thm:MLD_optimality} The value function in \eqref{eq:stochastic_valfun} is equal to
	\begin{align}\label{eq:stochastic_valfun_formula}
		V(x,t) = D_{\varphi^*}(x,\bar{x}) + \eps n (T-t),
	\end{align}
	and the feedback control $u(x,t) = - \nabla f (\nabla \varphi(x))$ is optimal.
\end{theorem}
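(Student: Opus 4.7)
The plan is to solve the problem by guessing the value function and verifying it via the Hamilton--Jacobi--Bellman equation, which for the controlled diffusion \eqref{eq:controlled_SDE} and the expected cost \eqref{eq:stochastic_cost_to_go} takes the form
\begin{align*}
\p_t V(x,t) + \eps\,\mathrm{tr}\!\big((\nabla^2\varphi^*(x))^{-1}\nabla^2 V(x,t)\big) + \inf_{u \in \Reals^n}\!\big\{\langle \nabla V(x,t), u\rangle + q(x,u)\big\} = 0,
\end{align*}
with the terminal condition $V(x,T) = D_{\varphi^*}(x,\bar{x})$. I would then test the ansatz \eqref{eq:stochastic_valfun_formula} directly against this equation.

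The verification of the ansatz is a short computation that leans entirely on Lemma~\ref{lm:Vdot_inequality}. Differentiating $V(x,t) = D_{\varphi^*}(x,\bar{x}) + \eps n(T-t)$ yields $\p_t V = -\eps n$, $\nabla V(x,t) = \nabla\varphi^*(x) - \bar{y}$, and $\nabla^2 V(x,t) = \nabla^2\varphi^*(x)$, so that the second-order term equals $\eps\,\mathrm{tr}(I_n) = \eps n$ and exactly cancels $\p_t V$. The $\langle \bar{y}, u \rangle$ contribution from $\langle \nabla V, u\rangle$ cancels the $\langle u, \bar{y}\rangle$ term inside $q(x,u)$, so the remaining infimum collapses to
\begin{align*}
\inf_{u \in \Reals^n}\big\{ f(\nabla\varphi^*(x)) + f^*(-u) + \langle u, \nabla\varphi^*(x)\rangle\big\},
\end{align*}
which is precisely the Fenchel--Young expression analyzed in the proof of Lemma~\ref{lm:Vdot_inequality}: it is nonnegative, vanishes at the unique point $u^\star(x) = -\nabla f(\nabla\varphi^*(x))$, and thereby identifies the candidate optimal feedback (the $\nabla\varphi$ in the theorem statement should be read as $\nabla\varphi^*$). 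Hence the ansatz solves the HJB equation with the correct terminal data.

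It remains to run a standard verification argument \cite[{\S}VI.4]{Fleming_Rishel_book}: apply It\^o's formula to $V(X_s,s)$ on $[t,T]$ under an arbitrary admissible control, take conditional expectations given $X_t = x$, and use the HJB inequality to conclude $V(x,t) \le J(x,t;u(\cdot))$; then repeat the computation with $u = u^\star$ to obtain equality via the sharpness in Fenchel--Young. The main obstacle is not the algebra but the analytic bookkeeping that underwrites the verification step: one must check that (i) the closed-loop SDE driven by $u^\star$ admits a strong solution on $[0,T]$, which will follow from the Lipschitz continuity of $\nabla f$, $\nabla\varphi^*$, and $x \mapsto \sqrt{(\nabla^2\varphi^*(x))^{-1}}$ granted by the strong-convexity and modified self-concordance assumptions on $\varphi$; and (ii) the It\^o stochastic integral $\int_t^s \langle \nabla V(X_r,r), \sqrt{2\eps(\nabla^2\varphi^*(X_r))^{-1}}\,\dif W_r\rangle$ is a true martingale, which follows from the affine-type growth of $\nabla V(x,t) = \nabla\varphi^*(x) - \bar{y}$ combined with standard moment bounds for the controlled diffusion under the same Lipschitz hypotheses.
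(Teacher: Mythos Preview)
Your proposal is correct and follows essentially the same route as the paper's own proof: guess the value function \eqref{eq:stochastic_valfun_formula}, plug it into the HJB equation for the controlled diffusion, use the Fenchel--Young computation from Lemma~\ref{lm:Vdot_inequality} to identify the minimizing $u$, and then invoke the verification theorem from \cite[{\S}VI.4]{Fleming_Rishel_book} together with the Lipschitz/growth conditions on $b$ and $\sigma$ to certify admissibility of the closed-loop control. You also correctly spot the typo ($\nabla\varphi$ should be $\nabla\varphi^*$) in the theorem statement.
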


\begin{proof} We use the verification theorem from the theory of controlled diffusions \cite[{\S}VI.4]{Fleming_Rishel_book}. We associate to the controlled diffusion process \eqref{eq:controlled_SDE} a family of infinitesimal generators $(\cL^u : u \in \Reals^n)$, where $\cL^u$ is the second-order linear differential operator 
	\begin{align*}
		\cL^u \deq \sum^n_{i=1}u_i \frac{\partial}{\partial x_i} + \eps \sum^n_{i,j=1} \big(\nabla^2 \varphi^*(x)\big)^{-1}_{ij} \frac{\partial^2}{\partial x_i \partial x_j}.
	\end{align*}
Then it is readily verified that the function $V$ defined in \eqref{eq:stochastic_valfun_formula} is a solution of the \textit{Hamilton--Jacobi--Bellman equation}
\begin{align}\label{eq:HJB}
	\frac{\partial}{\partial t}V(x,t) + \min_{u \in \Reals^d} \big\{ \cL^u V(x,t) + q(x,u) \big\} = 0 
\end{align}
on $\Reals^n \times [0,T]$ with the terminal condition $V(x,T) = D_{\varphi^*}(x,\bar{x})$. Indeed, since
\begin{align*}
	\frac{\partial}{\partial t}V(x,t) &= - \eps n , \\
	\nabla V(x,t) &= \nabla \varphi^*(x) - \nabla \varphi^*(\bar{x}), \\
	\nabla^2 V(x,t) &= \nabla^2 \varphi^*(x)
\end{align*}
we can follow the same argument as in the proof of Lemma~\ref{lm:Vdot_inequality} to show that, for any $u \in \Reals^n$, we have
\begin{align*}
	& \frac{\partial}{\partial t}V(x,t) +  \cL^u V(x,t) + q(x,u) \nonumber\\
	& = - \eps n +  \langle u, \nabla V(x,t) \rangle + \eps\, {\rm tr} \big\{ (\nabla^2 \varphi^*(x))^{-1} \nabla^2 V(x,t) \big\} \nonumber \\
	& \qquad \qquad  + f(\nabla \varphi^*(x)) + f^*(-u) + \langle u, \nabla \varphi^*(\bar{x}) \rangle \\
		& = f(\nabla \varphi^*(x)) + f^*(-u) + \langle u, \nabla \varphi^*(x) \rangle  \ge 0,
\end{align*}
with equality iff $u = -\nabla f (\nabla \varphi^*(x))$. Thus, $V(x,t)$ is a solution of the HJB equation \eqref{eq:HJB}, and evidently $V(x,T) = D_{\varphi^*}(x,\bar{x})$. Then, by Theorem~4.1 in \cite[{\S}VI.4]{Fleming_Rishel_book}, $V(x,t)$ is the value function in \eqref{eq:stochastic_valfun}, and the control given by
\begin{align*}
	u(x,t) &= \argmin_{u \in \Reals^n} \big\{ \cL^u V(x,t) + q(x,u) \big\} \\
	&= \argmin_{u \in \Reals^n} \big\{  f(\nabla \varphi^*(x)) + f^*(-u) + \langle u, \nabla \varphi^*(x)  \big\} \\
	&= -\nabla f(\nabla \varphi^*(x))
\end{align*}
is optimal (note that it is also time-invariant). This control is admissible since, by our assumptions on $f$ and $\varphi$, the maps $b(x) \deq -\nabla f (\nabla \varphi^*(x))$ and $\sigma(x) \deq \sqrt{2\eps (\nabla^2 \varphi^*(x))^{-1}}$ are Lipschitz-continuous and have at most linear growth, i.e., there exists a constant $K > 0$, such that
\begin{align*}
&	|b(x)-b(x')| + \| \sigma(x) - \sigma(x') \|_2 \le K|x-x'|, \\
& |b(x)|^2 + \|\sigma(x)\|^2_2 \le K(1+|x|^2)
\end{align*}
for all $x,x' \in \Reals^n$. Consequently, with the choice of $u(x,t) = b(x)$, the SDE \eqref{eq:controlled_SDE} has a unique strong solution \cite[\S5.2, Thm.~2.5]{Karatzas_Shreve_book}, so $u(\cdot)$ is indeed admissible. 
\end{proof}

\subsection{Quantitative estimates}

\begin{theorem}\label{thm:MLD_quantitative} Let $(X_t,Y_t)$, $t \ge 0$, be the random state and output trajectories of the mirror Langevin dynamics \eqref{eq:MLD} with deterministic initial condition $X_0 = x_0$ and $Y_0 = y_0 = \nabla \varphi^*(x_0)$. Then the following holds for every $T > 0$:
	\begin{enumerate}
		\item If $f$ is convex, then
		\begin{align}\label{eq:stoch_convex_rate}
			&\frac{1}{T}\Ex\Bigg[\int^T_0 \{f(Y_t) - f(\bar{y})\} \dif t\Bigg|Y_0 = y_0\Bigg] \nonumber\\
			& \qquad \le  \frac{1}{T}D_{\varphi}(\bar{y},y_0) + \eps n.
		\end{align}
		\item If $f$ is $\mu$-strongly convex w.r.t.\ $\varphi$, for $\mu > 0$, then
		\begin{align}\label{eq:stoch_strongly_convex_rate}
			&\Ex[D_{\varphi}(\bar{y},Y_t)|Y_0 = y_0] \nonumber\\
			& \qquad \le D_{\varphi}(\bar{y},y_0)e^{-\mu t} + \frac{\eps n}{\mu}(1-e^{-\mu T}).
		\end{align}
	\end{enumerate}
\end{theorem}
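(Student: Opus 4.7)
The plan is to mirror the proof of Theorem~\ref{thm:MD_quantitative}, but now applying It\^o's formula to the Lyapunov function $V(X_t) = D_{\varphi^*}(X_t,\bar{x})$ along the MLD trajectory \eqref{eq:MLD}. The derivatives
\begin{align*}
	\nabla V(x) = \nabla \varphi^*(x) - \bar{y}, \qquad \nabla^2 V(x) = \nabla^2\varphi^*(x)
\end{align*}
are already at hand from the proof of Theorem~\ref{thm:MLD_optimality}, and the It\^o second-order correction contributes exactly $\eps\, {\rm tr}\bigl(\nabla^2\varphi^*(X_t)(\nabla^2\varphi^*(X_t))^{-1}\bigr) = \eps n$. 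Granting for the moment that the resulting stochastic integral is a true martingale (justified below), taking expectation yields the ``energy balance''
\begin{align*}
	\frac{\dif}{\dif t}\Ex[V(X_t)] = -\Ex\bigl[\langle Y_t - \bar{y},\, \nabla f(Y_t)\rangle\bigr] + \eps n,
\end{align*}
which is exactly the deterministic identity underlying Theorem~\ref{thm:MD_quantitative} plus the constant stochastic overhead $\eps n$.

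For part 1), convexity of $f$ gives $\langle Y_t - \bar{y}, \nabla f(Y_t)\rangle \ge f(Y_t) - f(\bar{y})$. I would then integrate the balance on $[0,T]$, discard the nonnegative term $\Ex[V(X_T)]$, use the mirror identity $V(x_0) = D_{\varphi^*}(x_0,\bar{x}) = D_\varphi(\bar{y},y_0)$, and divide by $T$ to obtain \eqref{eq:stoch_convex_rate}. For part 2), $\mu$-strong convexity of $f$ w.r.t.\ $\varphi$ gives
\begin{align*}
	\langle Y_t - \bar{y}, \nabla f(Y_t)\rangle \ge f(Y_t) - f(\bar{y}) + \mu D_\varphi(\bar{y},Y_t) \ge \mu\, V(X_t),
\end{align*}
using $D_\varphi(\bar{y},Y_t) = D_{\varphi^*}(X_t,\bar{x}) = V(X_t)$ together with $f(Y_t) \ge f(\bar{y})$. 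The energy balance then becomes the scalar differential inequality $\tfrac{\dif}{\dif t}\Ex[V(X_t)] \le -\mu\,\Ex[V(X_t)] + \eps n$, and an integrating-factor argument delivers $\Ex[V(X_t)] \le V(x_0)\,e^{-\mu t} + \tfrac{\eps n}{\mu}(1 - e^{-\mu t})$, which, since $1 - e^{-\mu t} \le 1 - e^{-\mu T}$ for $t \le T$, implies \eqref{eq:stoch_strongly_convex_rate}.

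The main technical hurdle is the It\^o bookkeeping: the stochastic integral appearing in It\^o's formula must be a genuine martingale, not merely a local one, so that its expectation vanishes. The Lipschitz continuity and linear growth of $b(x) = -\nabla f(\nabla\varphi^*(x))$ and $\sigma(x) = \sqrt{2\eps(\nabla^2\varphi^*(x))^{-1}}$ established in the proof of Theorem~\ref{thm:MLD_optimality}, together with the Lipschitz continuity of $\nabla V = \nabla\varphi^* - \bar{y}$, give the standard second-moment bound $\sup_{t \in [0,T]}\Ex|X_t|^2 < \infty$ and hence the square integrability of the It\^o integrand that is needed to kill the martingale term. Once this is in place, the argument is a direct stochastic analogue of the deterministic one, with $\eps n$ appearing as the unavoidable price of the diffusion.
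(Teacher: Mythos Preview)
Your proposal is correct and, at the level of the underlying computations, coincides with the paper's proof: both hinge on applying It\^o's formula to $D_{\varphi^*}(X_t,\bar{x})$, picking up exactly the $\eps n$ trace term, and then using the convexity (resp.\ $\mu$-strong convexity) inequality for $\langle Y_t-\bar{y},\nabla f(Y_t)\rangle$ followed by integration (resp.\ Gr\"onwall).

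The only packaging difference is in part~1. The paper does not apply It\^o directly; instead it invokes Theorem~\ref{thm:MLD_optimality} to write
\[
D_{\varphi}(\bar{y},y_0)+\eps n T=\Ex\Bigl[\int_0^T q(X_t,u_t)\,\dif t + D_{\varphi^*}(X_T,\bar{x})\Bigr]
\]
and then uses the identity $q(X_t,u_t)=f(Y_t)-f(\bar{y})+D_f(\bar{y},Y_t)\ge f(Y_t)-f(\bar{y})$. This is the same inequality you use (since $\langle Y_t-\bar{y},\nabla f(Y_t)\rangle = f(Y_t)-f(\bar{y})+D_f(\bar{y},Y_t)$), but routed through the optimal-control value function so as to emphasize the variational interpretation that is the paper's theme. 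Your direct-It\^o route is slightly more self-contained; the paper's route reuses Theorem~\ref{thm:MLD_optimality} and thereby highlights that the $\eps n$ overhead is exactly the stochastic part of the value function. For part~2 the two arguments are essentially identical (the paper applies It\^o to the time-dependent $V(x,t)=D_{\varphi^*}(x,\bar{x})+\eps n(T-t)$ and uses the HJB identity, which collapses to your energy balance), and your justification of the martingale property is what the paper leaves implicit.
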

\begin{proof} Let $u_t \deq -\nabla f(\nabla \varphi^*(X_t))$. Then, proceeding just like in the proof of Theorem~\ref{thm:MD_quantitative}, we can write
	\begin{align*}
		q(X_t,u_t) &= f(Y_t) + f^*(-u_t) + \langle u_t, \bar{y} \rangle \\
		&= f(Y_t) - f(\bar{y}) + D_f(\bar{y},Y_t) \\
		&\ge f(Y_t) - f(\bar{y}).
	\end{align*}
Using this together with Theorem~\ref{thm:MLD_optimality} gives
	\begin{align*}
	&	D_{\varphi}(\bar{y},y_0) + \eps n T = D_\varphi(x_0,\bar{x}) + \eps n T \\
		&= \Ex\Bigg[\int^T_0 q(X_t,u_t) \dif t + D_{\varphi}(X_T,\bar{x})\Bigg|X_0 = x_0\Bigg] \\
		&\ge \Ex\Bigg[\int^T_0 \{f(Y_t)-f(\bar{y})\}\dif t \Bigg|X_0 = x_0 \Bigg].
	\end{align*}
	Dividing both sides by $T > 0$ and using the fact the $\sigma$-algebras $\sigma(X_t : t \in [0,T])$ and $\sigma(Y_t: t \in [0,T])$ coincide since $\nabla \varphi^*$ is a bijection,  we obtain \eqref{eq:stoch_convex_rate}.
	
When $f$ is $\mu$-strongly convex, we have
\begin{align}\label{eq:stoch_SC_1}
	f(\bar{y}) - f(Y_t) \ge \langle \nabla f (Y_t), \bar{y} - Y_t \rangle + \mu D_\varphi(\bar{y},Y_t).
\end{align}
On the other hand, by It\^o's lemma and by \eqref{eq:HJB},
\begin{align}\label{eq:stoch_SC_2}
\!\!\!\!V(X_t,t) &= V(X_0,0) + \int^t_0 \langle \nabla f(Y_s), \bar{y} - Y_s \rangle \dif s + M_t,
\end{align}
where $M_t$ is a zero-mean $(\cF_t)$-martingale. Since
\begin{align*}
	V(X_s,s) &= D_{\varphi*}(X_s,\bar{x}) + \eps n (T-s) \\
	&= D_\varphi(\bar{y},Y_s) + \eps n(T-s),
\end{align*}
combining \eqref{eq:stoch_SC_1} and \eqref{eq:stoch_SC_2} and then taking expectations given $Y_0 = y_0$ yields 
\begin{align*}
	&\Ex[D_\varphi(\bar{y},Y_t)|Y_0 = y_0] \nonumber\\
	&\le D_\varphi(\bar{y},y_0) + \eps n t - \mu \int^t_0 \Ex[D_\varphi(\bar{y},Y_s)|Y_0 = y_0] \dif s
\end{align*}
for all $t \in [0,T]$. Gr\"onwall's inequality gives \eqref{eq:stoch_strongly_convex_rate}.
\end{proof}
\noindent Note that, in contrast with the deterministic setting (cf.~Theorem~\ref{thm:MD_quantitative}), when the objective function $f$ is not strongly convex, we only have guarantees on the expected average objective $\Ex[\frac{1}{T}\int^T_0 f(Y_t) \dif t | Y_0 = y_0]$, which, owing to the convexity of $f$, translates into an optimization error estimate for the time average of the trajectory, $\tilde{Y}_T \deq \frac{1}{T}\int^T_0 Y_t \dif t$:
\begin{align*}
	&\Ex[f(\tilde{Y}_T) - f(\bar{y})|Y_0 = y_0] \nonumber\\
	&\le \frac{1}{T}\Ex\Bigg[\int^T_0 \{f(Y_t)-f(\bar{y})\}\dif y\Bigg| Y_0 = y_0 \Bigg] \\
	&\le \frac{1}{T}D_\varphi(\bar{y},y_0) + \eps n.
\end{align*}
However, as the following result shows, in the low-noise regime (i.e., for all sufficiently small $\eps$), with high probability, the MLD output trajectory $(Y_t)_{0 \le t \le T}$ closely tracks the deterministic mirror-descent output trajectory $(y(t))_{0 \le t \le T}$ with the same initial condition  $Y_0 = y(0)  = y_0$:
\begin{theorem}\label{thm:MLD_low_noise} There exist positive time-independent constants $C_i$, $i = 1,2,3$, such that, for every $0 < \eps \le \frac{1}{C_1 T^3}e^{-C_2 T}$, the following estimate holds with probability at least $1-\delta$:
	\begin{align}
		\sup_{0 \le t \le T}|f(Y_t)-f(y(t))| \le \frac{C_3}{T}\sqrt{n \log \frac{n}{\delta}}.
	\end{align}
\end{theorem}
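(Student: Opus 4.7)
The plan is the standard ``SDE versus ODE'' comparison: obtain a high-probability bound on $\sup_{0 \le t \le T}|X_t - x(t)|$ via It\^o calculus, Gr\"onwall, and a martingale tail estimate, and then push the bound through the Lipschitz maps $\nabla\varphi^*$ and $f$. Both processes share the drift $b(x) \deq -\nabla f(\nabla\varphi^*(x))$, which is globally Lipschitz with some constant $L_b$: $\nabla f$ is Lipschitz by hypothesis and $\nabla\varphi^*$ is Lipschitz since $\varphi$ is strongly convex. Hence the error $E_t \deq X_t - x(t)$ satisfies $|E_t| \le L_b \int_0^t |E_s|\dif s + |M_t|$, with $M_t \deq \int_0^t \sqrt{2\eps(\nabla^2\varphi^*(X_s))^{-1}}\dif W_s$. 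Taking $\sup$ on both sides and applying Gr\"onwall to the non-decreasing function $u(t) \deq \sup_{s \le t}|E_s|$ will give $\sup_{t \le T}|E_t| \le e^{L_b T}\sup_{t \le T}|M_t|$.

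Next I would control $\sup_t|M_t|$ by localization and a coordinate-wise concentration inequality. For each $i$, $\langle M^{(i)}\rangle_T = 2\eps\int_0^T ((\nabla^2\varphi^*(X_s))^{-1})_{ii}\dif s$. I would introduce the stopping time $\tau_R \deq \inf\{t : |X_t|>R\}$ with $R$ large enough to strictly contain the bounded deterministic orbit $\{x(t) : t \ge 0\}$ (which is bounded since $x(t)\to\bar{x}$ by Theorem~\ref{thm:MD_optimality}). On $\{\tau_R>T\}$ the continuous map $x\mapsto\|(\nabla^2\varphi^*(x))^{-1}\|_{\mathrm{op}}$ is bounded by some constant $B_R$, so $\langle M^{(i)}\rangle_T\le 2\eps B_R T$ almost surely; a standard exponential/Bernstein bound for continuous martingales combined with a union bound over $i=1,\ldots,n$ then yields $\sup_{t\le T\wedge\tau_R}|M_t|\le C\sqrt{\eps B_R n T\log(n/\delta)}$ with probability at least $1-\delta/2$. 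A short bootstrap---the hypothesized smallness of $\eps$ combined with the Gr\"onwall bound will keep $|X_t|\le R$ on $[0,T]$---then closes the localization on a further event of probability at least $1-\delta/2$.

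Finally, since $\nabla\varphi^*$ is Lipschitz and $f$ is Lipschitz on the bounded set containing both trajectories (using that $\nabla f$ is Lipschitz), one obtains $|f(Y_t)-f(y(t))|\le L|E_t|$ uniformly in $t$ for some constant $L$ independent of $T$. Combining everything, with probability at least $1-\delta$,
\[
\sup_{0\le t\le T}|f(Y_t)-f(y(t))|\le L\cdot e^{L_b T}\cdot C\sqrt{\eps B_R n T\log(n/\delta)}.
\]
Setting $C_2\deq 2L_b$ and letting $C_1$ absorb $L,C,B_R$, the hypothesis $\eps\le(C_1 T^3)^{-1}e^{-C_2 T}$ gives $e^{L_b T}\sqrt{\eps T}\le T^{-1}$, so the theorem follows with $C_3$ of the same order as $L\cdot C\sqrt{B_R}$.

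The main obstacle will be the non-explosion/bootstrap step: the modified self-concordance of $\varphi$ only makes $\sqrt{(\nabla^2\varphi^*)^{-1}}$ Lipschitz---hence at most sub-linearly growing---rather than uniformly bounded, so $\tau_R$ must be chosen self-consistently, large enough to contain both the deterministic orbit and the stochastic excursion but small enough that $B_R$ remains compatible with the target tracking radius. The exponential $e^{L_b T}$ inherited from Gr\"onwall is exactly what forces the restrictive hypothesis $\eps\lesssim T^{-3}e^{-O(T)}$ on the temperature.
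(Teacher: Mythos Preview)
Your strategy is correct and matches the paper's at the structural level: compare the SDE trajectory to the ODE trajectory via the shared Lipschitz drift, apply Gr\"onwall, control the stochastic integral by a martingale tail bound, and push the result through the Lipschitz maps $\nabla\varphi^*$ and $f$. The differences are in the implementation. First, for the martingale term the paper does not use exponential/Bernstein concentration plus a union bound; instead it asserts a global two-sided eigenvalue bound $\kappa_1 I \preceq \nabla^2\varphi^*(x) \preceq \kappa_2 I$, invokes the Dambis--Dubins--Schwarz time-change to dominate $\sup_t\big|\int_0^t\xi_s\,\dif W_s\big|$ by $\sup_{t\le\kappa T}|B_t|$ for a Brownian motion $B$, and then applies the reflection principle. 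This sidesteps your localization/bootstrap entirely---your caution that modified self-concordance only gives linear growth of $\sqrt{(\nabla^2\varphi^*)^{-1}}$ is well-placed, and the paper's global-bound assertion is in fact the weaker point of its argument. Second, for the passage from $|Y_t-y(t)|$ to $|f(Y_t)-f(y(t))|$ the paper uses the descent-lemma estimate $f(Y_t)-f(y(t))\le\langle\nabla f(y(t)),Y_t-y(t)\rangle+\tfrac{L_f}{2}|Y_t-y(t)|^2$ together with a uniform bound on $|\nabla f(y(t))|$ along the deterministic orbit, giving a linear-plus-quadratic bound in $\Delta_t$; you instead use a local Lipschitz constant for $f$, which is simpler but requires your localization to be in place. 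Both routes land on the same $\eps\lesssim T^{-3}e^{-O(T)}$ threshold, for the same reason (the Gr\"onwall exponential).
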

\begin{proof} Let $\Delta_t \deq |Y_t - y(t)|$. The following estimate holds by the Lipschitz continuity of $\nabla f$:
	\begin{align*}
		f(Y_t) - f(y(t)) \le \langle \nabla f(y(t)), Y_t - y(t) \rangle + \frac{L_f}{2}|Y_t - y(t)|^2,
	\end{align*}
	where $L_f$ is the Lipschitz constant of $\nabla f$. Moreover, the gradient norms $|\nabla f(y(t))|$ are uniformly bounded since
	\begin{align*}
		|\nabla f(y(t))| &\le |\nabla f(y(t))-\nabla f(y(0))| + |\nabla f(y(0))| \\
		&\le L_f |y(t)-y(0)| + |\nabla f(y(0))| \\
		& \le L_f |y(t)-\bar{y}| + L_f |y(0) - \bar{y}| + |\nabla f(y(0))| \\
		&\le 2L_f \sqrt{\frac{2}{\alpha} D_{\varphi}(\bar{y},y(0))} + |\nabla f(y(0))| =: K_0,
	\end{align*}
	which in turn implies that
	\begin{align}\label{eq:smoothness_estimate}
		\sup_{0 \le t \le T}|f(y(t))-f(y(0))| \le K_0 \sup_{0 \le t \le T}\Delta_t + \frac{L_f}{2}\sup_{0 \le t \le T}\Delta^2_t.
	\end{align}
	Define the matrix-valued process $(\xi_t)_{0 \le t \le T}$ by $\xi_t \deq \sqrt{\nabla^2 \varphi^*(X_t))^{-1}}$. For each $t \in [0,T]$, we have
	\begin{align*}
	&	|X_t - x(t)| \nonumber\\
%	&= \Bigg|\int^t_0 \big(\nabla f(y(s)) - \nabla f(Y_s)\big) \dif s + \sqrt{2\eps} \int^t_0 \xi_s \dif W_s\Bigg| \\
	& \le L_f \int^t_0 |Y_s - y(s)|\dif s 
	+ \sqrt{2\eps}\sup_{0 \le t \le T}\Bigg|\int^t_0 \xi_s\dif W_s\Bigg|.
	\end{align*}
By our assumptions on $\varphi$, there exist positive constants $\kappa_2 \ge \kappa_1 > 0$, such that the eigenvalues of $\nabla^2 \varphi^*(x)$ lie in the interval $[\kappa_1,\kappa_2]$. Hence, the process $\xi_t$ is uniformly bounded, so the quadratic variations of the matrix entries $[\xi^{ij}]_t$, $1 \le i,j \le n$, are uniformly bounded by a positive multiple of $t$. Hence, by the time-change theorem for martingales \cite[\S3.4, Thm.~4.6]{Karatzas_Shreve_book}, there exist a constant $\kappa > 0$ and a standard $n$-dimensional Brownian motion $(B_t)_{t \ge 0}$, such that
\begin{align*}
	\sup_{0 \le t \le T} \Bigg|\int^t_0 \xi_s \dif W_s \Bigg| \le \sup_{0 \le t \le \kappa T} |B_t|.
\end{align*}
Since $\nabla \varphi^*$ is Lipschitz-continuous, we have
\begin{align*}
	\Delta_t &\le L_{\varphi^*} |X_t - x(t)| \nonumber \\
	&\le L_{\varphi^*} L_f \int^t_0 \Delta_s \dif s + \sqrt{2\eps} L_{\varphi^*}\sup_{0 \le t \le \kappa T} |B_t|.
\end{align*}
Gr\"onwall's inequality therefore gives
\begin{align}\label{eq:Delta_t}
	\sup_{0 \le t \le T} \Delta_t \le \sqrt{2\eps} L_{\varphi^*} \sup_{0 \le t \le \kappa T} |B_t| e^{L_{\varphi^*}L_fT}
\end{align}
If $\eps \le \frac{1}{L^2_{\varphi^*}T^3}e^{-2L_{\varphi^*}L_f T}$, then, using \eqref{eq:Delta_t}  in \eqref{eq:smoothness_estimate}, we obtain
\begin{align*}
&	\sup_{0 \le t \le T} |f(Y_t)-f(y(t))| \nonumber\\
& \qquad\le \frac{\sqrt{2}K_0}{T^{3/2}} \sup_{0 \le t \le \kappa T} |B_t| 
+ \frac{L_f}{T^3} \sup_{0 \le t \le \kappa T}|B_t|^2.
\end{align*}
By the reflection principle for the Brownian motion \cite[p.~96]{Karatzas_Shreve_book}, for every $r > 0$
\begin{align*}
	\Pr\left\{\sup_{0 \le t \le \kappa T}|B_t| \ge r\right\} \le 2\Pr\left\{ |B_{\kappa T}| \ge r \right\} \le  4n e^{-r^2/2n\kappa T},
\end{align*}
and therefore
\begin{align*}
	& \sup_{0 \le t \le T} |f(Y_t)-f(y(t))| \le \frac{\tilde{C}}{T}\sqrt{n \log \frac{n}{\delta}}
\end{align*}
with probability at least $1-\delta$, where $\tilde{C}$ is a constant that depends on $K_0,L_f,\kappa$.
\end{proof}

\section{Conclusion and future directions}

In this paper, we have presented an interpretation of deterministic and stochastic continuous-time mirror descent methods in the framework of ``inverse optimal control'' \cite{Casti_inverse_OC}---that is, given an autonomous (i.e., control-free) dynamical system, identify a controlled dynamical system and a cost criterion, such that the autonomous dynamics can be viewed as the closed-loop system corresponding to an optimal control. An intriguing direction for future research is to interpret other optimization methods, such as the heavy-ball method \cite{Polyak_heavy_ball}, through the inverse optimal control lens. 

%%%%%%%%%%%%%%%%%%%%%%%%%%%%%%%%%%%%%%%%%%%%%%%%%%%%%%%%%%%%%%%%%%%%%%%%%%%%%%%%
\section*{Acknowledgments}
%%%%%%%%%%%%%%%%%%%%%%%%%%%%%%%%%%%%%%%%%%%%%%%%%%%%%%%%%%%%%%%%%%%%%%%%%%%%%%%%

The authors would like to thank Jelena Diakonikolas for pointing them to Ref.~\cite{Ghoussoub_Tzou_gradflows}, which was the initial inspiration for this work, and also Anatoli Juditsky, Philippe Rigollet, and Matus Telgarsky for insightful comments and suggestions.

\bibliographystyle{IEEEtran}
%\bibliography{mirror_Langevin}
\bibliography{mirror_descent.bbl}

% Generated by IEEEtran.bst, version: 1.14 (2015/08/26)
\begin{thebibliography}{10}
\providecommand{\url}[1]{#1}
\csname url@samestyle\endcsname
\providecommand{\newblock}{\relax}
\providecommand{\bibinfo}[2]{#2}
\providecommand{\BIBentrySTDinterwordspacing}{\spaceskip=0pt\relax}
\providecommand{\BIBentryALTinterwordstretchfactor}{4}
\providecommand{\BIBentryALTinterwordspacing}{\spaceskip=\fontdimen2\font plus
\BIBentryALTinterwordstretchfactor\fontdimen3\font minus
  \fontdimen4\font\relax}
\providecommand{\BIBforeignlanguage}[2]{{%
\expandafter\ifx\csname l@#1\endcsname\relax
\typeout{** WARNING: IEEEtran.bst: No hyphenation pattern has been}%
\typeout{** loaded for the language `#1'. Using the pattern for}%
\typeout{** the default language instead.}%
\else
\language=\csname l@#1\endcsname
\fi
#2}}
\providecommand{\BIBdecl}{\relax}
\BIBdecl

\bibitem{Brezis_Ekeland}
H.~Brezis and I.~Ekeland, ``Un principe variatonnel associ\'e a certaines
  equations paraboliques. {Le cas} independant du temps.'' \emph{C.R. Acad.
  Sci. Paris S\'er. A}, vol. 282, pp. 971--974, 1976.

\bibitem{Ghoussoub_Tzou_gradflows}
N.~Ghoussoub and L.~Tzou, ``A variational principle for gradient flows,''
  \emph{Mathematische Annalen}, vol. 330, pp. 519--549, 2004.

\bibitem{Sontag_system_theory}
E.~D. Sontag, \emph{Mathematical Control Theory: Deterministic
  Finite-Dimensional Systems}, 2nd~ed.\hskip 1em plus 0.5em minus 0.4em\relax
  Springer, 1998.

\bibitem{Nemirovski_Yudn}
A.~S. Nemirovski and D.~B. Yudin, \emph{Problem Complexity and Method
  Efficiency in Optimization}.\hskip 1em plus 0.5em minus 0.4em\relax New York:
  Wiley, 1983.

\bibitem{Hsieh_MLD}
Y.-P. Hsieh, A.~Kavis, P.~Rolland, and V.~Cevher, ``Mirrored {Langevin}
  dynamics,'' in \emph{Neural Information Processing Systems}, vol.~32, 2018.

\bibitem{Chewi_MLD}
S.~Chewi, T.~{Le Gouic}, C.~Lu, T.~Maunu, P.~Rigollet, and A.~Stromme,
  ``Exponential ergodicity of mirror-{Langevin} diffusions,'' in \emph{Neural
  Information Processing Systems}, vol.~34, 2020.

\bibitem{Li_MLD}
R.~Li, M.~Tao, S.~S. Vempala, and A.~Wibisono, ``The mirror {Langevin}
  algorithm converges with vanishing bias,'' in \emph{Conference on Learning
  Theory}, 2022.

\bibitem{Rockafellar_book}
R.~T. Rockafellar, \emph{Convex Analysis}.\hskip 1em plus 0.5em minus
  0.4em\relax Princeton University Press, 1970.

\bibitem{PLG}
N.~Cesa-Bianchi and G.~Lugosi, \emph{Prediction, Learning, and Games}.\hskip
  1em plus 0.5em minus 0.4em\relax Cambridge University Press, 2006.

\bibitem{Krichene_AMD}
W.~Krichene, A.~M. Bayen, and P.~L. Bartlett, ``Accelerated mirror descent in
  continuous and discrete time,'' in \emph{Neural Information Processing
  Systems}, 2015.

\bibitem{Telgarsky_linopt}
M.~Telgarsky, ``Stochastic linear optimization never overfits with
  quadratically-bounded losses on general data,'' in \emph{Conference on
  Learning Theory}, 2022.

\bibitem{Bartlett_etal_AOGD}
P.~L. Bartlett, E.~Hazan, and A.~Rakhlin, ``Online adaptive gradient descent,''
  in \emph{Neural Information Processing Systems}, 2007.

\bibitem{Hiriart_convex}
J.-B. Hiriart-Urruty and C.~Lemar\'echal, \emph{Fundamentals of Convex
  Analysis}.\hskip 1em plus 0.5em minus 0.4em\relax Springer, 2001.

\bibitem{Fleming_Rishel_book}
W.~H. Fleming and R.~W. Rishel, \emph{Deterministic and Stochastic Optimal
  Control}.\hskip 1em plus 0.5em minus 0.4em\relax Springer, 1975.

\bibitem{Karatzas_Shreve_book}
I.~Karatzas and S.~E. Shreve, \emph{Brownian Motion and Stochastic Calculus},
  2nd~ed.\hskip 1em plus 0.5em minus 0.4em\relax Springer, 1998.

\bibitem{Casti_inverse_OC}
J.~Casti, ``On the general inverse problem of optimal control theory,''
  \emph{Journal of Optimization Theory and Applications}, vol.~32, no.~4, pp.
  491--497, 1980.

\bibitem{Polyak_heavy_ball}
B.~T. Polyak, ``Some methods of speeding up the convergence of iteration
  methods,'' \emph{USSR Computational Mathematics and Mathematical Physics},
  vol.~4, no.~5, pp. 1--17, 1964.

\end{thebibliography}

\end{document}